\newtheorem{thm}{Theorem}[section]
\newtheorem{cor}[thm]{Corollary}
\newtheorem{lemma}[thm]{Lemma}
\newtheorem{prop}[thm]{Proposition}
\theoremstyle{definition}
\newtheorem{remark}[thm]{Remark}
\theoremstyle{question}
\theoremstyle{Conjecture}
\newtheorem{con}[thm]{Conjecture}
\theoremstyle{Problem}
\numberwithin{equation}{section}
\begin{document}

\title {The influence of the nilpotentizer on group structure}%
\author{N. Ahmadkhah and M. Zarrin}%

\address{Department of Mathematics, University of Kurdistan, P.O. Box: 416, Sanandaj, Iran}%
 \email{N.Ahmadkhah20@gmail.com}
 \address{Department of Mathematics, Texas State University, 601 University Drive, San Marcos, TX, 78666, USA}
 \email{m.zarrin@txstate.edu}
\begin{abstract}
For a finite group $G$ and an element $x\in G$, the subset $$ nil_G(x)=\{y\in G \mid <x,y>~~ is ~~ nilpotent\}$$
 is called nilpotentizer of $x$ in $G$. In this paper, we give two solvabilty criteria  for a finite group by the structure and the size
  of nilpotentizer of an element on finite group. In fact, we show that if there exists an element $x$ of $G$ such that $nil_G(x)$ 
  generates a maximal subgroup of $G$ and the simple commutator of weight $2 ~~or ~~3$ of elements of $nil_G(x)$ is equal to 
  $1$  or $|nil_G(x)|= p^n$, where $p$ is prime and $n=1, 2$. Then $G$ is a solvable group.  \\

{\bf Keywords}. Nilpotentizer; Nilpotent group; Solvable group.

{\bf Mathematics Subject Classification (2020)}. 20D60; 20D15; 20D10

\end{abstract}
\maketitle

\section{\textbf{ Introduction and Basic results}}
Abdollahi and Zarrin in \cite{AZ} introduced the nilpotentizer of an element $x$ of a group $G$ as follows: 

$$ nil_G(x)=\{y\in G \mid <x,y>~~ is ~~ nilpotent\},$$
and also $$nil(G)=\{x\in G \mid <x,y>~~ is ~~ nilpotent ~~for~~ all~~  y ~~in ~~G\}$$
that is called  nilpotentizer of $G$.
We can put it in this way, $nil_G(x)$ and $nil(G)$ are generalizations of centeralizer of an element, say $C_G(x)$ and center of a group $G$, say $Z(G)$. Means, $C_G(x)\subseteq nil_G(x)$ and $Z(G)\subseteq nil(G).$ For more information regarding $nil_G(x)$, see \cite{JMR}.

Note that for all elements $x\in G$, we have $nil_G(x)=G=nil(G)$ when $G$ is weakly nilpotent (i.e., every two generated subgroup of G is nilpotent, see also \cite{zw}). 
 But in general $nil_G(x)$ is not subgroup. For example, in the symmetric group $S_4$ of degree $4$, $nil_{S_4}((12)(34))$  is not subgroup;  and also  it is not known if $nil(G)$  
 is a subgroup, but in many  cases it is (see  Lemma 3.3 and  Proposition 2.1, \cite{AZ}).

  For that reason, the authors defined an $n$-group, where a group $G$ is an $n$-group if $nil_G(x)$ is a subgroup of $G$ for every $x\in G$.

The authors in \cite{AZ, Z} proved a few interesting results on the nilpotentizers. Here we collect these properties for reader convenience, as follows: 

\begin{thm}\label{n0}  \rm {(}see \cite{AZ})
Let $G$ be a group and $x\in G$. Then
	\item [(1)]$<x>\subseteq(<x>, Z(G))\subseteq C_G(x)\subseteq nil_G(x)$.
	\item [(2)]$nil_G(x)$ is the union of all maximal nilpotent subgroups of $G$ containing $x$.
	\item [(3)]$|nil_G(x)|$ is divisible by $|x|$.
	\item[(4)]If $N$ is a normal subgroup of $G$ and $x,y\in G$, then \label{n1} \\\\
	   A.  $\frac{nil_G(x)N}{N}\subseteq nil_\frac{G}{N}(xN)$. (Obviousely here $\frac{nil_G(x)N}{N}:= \{yN\mid y\in nil_G(x)N\})$.\\
	  B.  $nil_\frac{G}{K}(xK)=\frac{nil_G(x)}{K}$, where $K$ is a normal subgroup of $G$ with $K\le Z^*(G)$. \rm{(}$Z^*(G)$ is denoted the hypercenter of $G$\rm{)}.

\end{thm}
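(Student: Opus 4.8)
The plan is to verify the four assertions in turn; all but the last are immediate from the definition together with the standard facts that subgroups and quotients of nilpotent groups are nilpotent and that a group generated by two commuting elements is abelian. For (1): since $\langle x\rangle$ is cyclic and $Z(G)$ is central, the subgroup $(\langle x\rangle,Z(G))=\langle x\rangle Z(G)$ is abelian and contains $x$, so the first two inclusions are clear; every element of $\langle x\rangle Z(G)$ commutes with $x$, giving $\langle x\rangle Z(G)\subseteq C_G(x)$; and if $y\in C_G(x)$ then $\langle x,y\rangle$ is abelian, hence nilpotent, so $y\in nil_G(x)$. For (2): if $M$ is a maximal nilpotent subgroup containing $x$ and $y\in M$, then $\langle x,y\rangle\le M$ is nilpotent, which gives ``$\supseteq$''; conversely, for $y\in nil_G(x)$ the nilpotent subgroup $\langle x,y\rangle$ lies (by finiteness of $G$, or by Zorn's lemma) inside some maximal nilpotent subgroup $M$, and $x\in\langle x,y\rangle\le M$, so $y$ is in the stated union.

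For (3): for $y\in nil_G(x)$ and $a\in\langle x\rangle$ one has $\langle x,ya\rangle=\langle x,y\rangle$, so $ya\in nil_G(x)$; hence $nil_G(x)$ is a union of right cosets of $\langle x\rangle$, and $|x|$ divides $|nil_G(x)|$. For (4A): for $y\in nil_G(x)$ the image of the nilpotent group $\langle x,y\rangle$ under the canonical epimorphism $G\to G/N$ is $\langle xN,yN\rangle$, hence nilpotent, so $yN\in nil_{G/N}(xN)$; since $nil_G(x)N/N$ is precisely the set of such cosets, the inclusion follows.

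Part (4B) is where the work lies, and I would base it on the elementary fact that $H\cap Z_i(G)\le Z_i(H)$ for every subgroup $H\le G$ and every $i$ --- proved by induction on $i$ using $[Z_i(G),G]\le Z_{i-1}(G)$ --- and hence $H\cap Z^*(G)\le Z^*(H)$. First I would prove the auxiliary claim that $LK$ is nilpotent whenever $L\le G$ is nilpotent: with $H=LK$ one has $K=H\cap K\le H\cap Z^*(G)\le Z^*(H)$, so $H/Z^*(H)$ is a homomorphic image of $H/K\cong L/(L\cap K)$ and thus nilpotent; being nilpotent with trivial centre it is trivial, so $H=Z^*(H)$, and a hypercentral group that is finitely generated (in particular every subgroup, when $G$ is finite) is nilpotent. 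Taking $L=\langle x\rangle$ gives $K\subseteq\langle x\rangle K\subseteq nil_G(x)$, and taking $L=\langle x,y\rangle$ together with $\langle x,yk\rangle\le\langle x,y\rangle K$ shows $nil_G(x)$ is a union of cosets of $K$, so that $nil_G(x)/K$ is well defined inside $G/K$. The inclusion $nil_G(x)/K\subseteq nil_{G/K}(xK)$ is then (4A) applied with $N=K$ together with $nil_G(x)K=nil_G(x)$. For the reverse inclusion, take $yK\in nil_{G/K}(xK)$ and put $H=\langle x,y\rangle$; then $H/(H\cap K)\cong HK/K$ is nilpotent while $H\cap K\le Z^*(H)$, so $H/Z^*(H)$ is nilpotent with trivial centre, hence trivial, $H$ is nilpotent, and $y\in nil_G(x)$.

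The hard part will be exactly this engine of (4B): upgrading ``nilpotent modulo the hypercentral subgroup $K$'' to ``nilpotent''. This is where the hypothesis $K\le Z^*(G)$ is genuinely used, and it forces one to argue inside the two-generated subgroup $\langle x,y\rangle$ rather than in $G$ itself, so that ``hypercentral'' may be replaced by ``nilpotent'' (trivially so if $G$ is taken to be finite). The remaining items are straightforward bookkeeping with the definition.
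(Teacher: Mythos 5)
The paper offers no proof of this theorem --- it is quoted verbatim from \cite{AZ} as background --- so there is nothing to compare your argument against; I can only assess it on its own terms, and it is correct. Parts (1)--(3) and (4A) are the standard bookkeeping you describe, and your engine for (4B) --- the inclusion $H\cap Z_i(G)\le Z_i(H)$, used to show that $H/Z^*(H)$ is nilpotent with trivial centre and hence trivial whenever $H/(H\cap K)$ is nilpotent and $H\cap K\le Z^*(G)$ --- is exactly the right way to upgrade ``nilpotent modulo $K$'' to ``nilpotent'' and correctly isolates where the hypothesis $K\le Z^*(G)$ is used.
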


Those kind of interesting properties motivate us to continue the investigation of the influenec of nilpotentizer of an element on finite group.\\

Flavell \cite{P1} proved that a finite group is solvable if and only if every two elements generate a solvable group. In this result all the elements of group play an important role.
Then Flavell in \cite{P2}, proved that in a conjugacy class $\mathcal C$ of a finite group, if there exists a constant $k$  such that every $k$ elements of $\mathcal C$ generate a solvable subgroup,
then $\mathcal C$ generates a solvable subgroup. As in general, nilpotentizers of elements are not subgroups, in Section 2, we try to find some results, like  Flavell's results, regarding the influenece of the structure of 
nilpotentizers of elements on solvability of a finite group. Also it is an important problem to find algebraic conditions on the elements of a single nilpotentizer determining restrictions on the structure of the whole group. 

In Section $2$, first we prove that if for an element $x\in G$ there exists a constant $n$  such that every $n$ elements of $nil_G(x)$ generate a nilpotent subgroup of class at most $n-1$, then $nil_G(x)$ is a maximal nilpotent subgroup of class at most $n-1$. Then we obtain a  solvibility criterion for a group $G$ by the structure of its nilpotentizers. In fact, we show that: if there exists $x\in G$, such that $nil_G(x)$ generates a maximal subgroup of $G$ and $[y_1, y_2, y_3]=1$ for every $y_1, y_2, y_3\in nil_G(x)$, then $G$ is solvable.

\begin{thm}\label{n}
	Let $G$ be a group and there exists an element $x$ of $G$ such that $nil_G(x)$ generates a maximal subgroup of $G$.
	\begin{itemize}
		\item [(1)]Assume that the elements of $nil_G(x)$ commute pairwises, then $G$ is solvable.           
		\item [(2)]If $[y_1, y_2, y_3]=1$ for every $y_1, y_2, y_3\in nil_G(x)$, then $G$ is solvable.
	\end{itemize}
\end{thm}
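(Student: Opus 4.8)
The plan is to reduce the theorem to a statement about the subgroup $M := \langle nil_G(x)\rangle$, which by hypothesis is maximal in $G$, and then leverage part (1) together with Flavell's solvability criterion. The first observation is that in both cases $nil_G(x)$ contains $C_G(x) \supseteq \langle x, Z(G)\rangle$, so $x \in M$; moreover $nil_G(x)$ is a union of maximal nilpotent subgroups of $G$ containing $x$ (Theorem \ref{n0}(2)), which will let us locate many nilpotent subgroups inside $M$. The key structural claim to establish is that $M$ is nilpotent — in case (1) because pairwise commutativity of a generating set forces $M = \langle nil_G(x)\rangle$ to be abelian; in case (2) because the identity $[y_1,y_2,y_3]=1$ on the generating set $nil_G(x)$ should force $M$ to be nilpotent of class at most $2$. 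This second implication is the technical heart: one must pass from a three-variable commutator identity holding on a generating \emph{set} to a class bound on the generated \emph{group}. Here I would invoke the preliminary result announced in the introduction — that if every $n$ elements of $nil_G(x)$ generate a nilpotent subgroup of class at most $n-1$, then $nil_G(x)$ is itself a (maximal) nilpotent subgroup of class at most $n-1$ — applied with $n=3$ in case (2) and $n=2$ in case (1). This immediately gives that $nil_G(x) = M$ is a nilpotent, hence solvable, subgroup.

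**Next I would run the maximality/solvability argument.** Since $M$ is a maximal subgroup of $G$ and $M$ is nilpotent (in particular solvable), I want to conclude $G$ is solvable. If $G$ were nonsolvable, consider a minimal counterexample $G$; then every proper subgroup containing... — more efficiently, use Flavell's theorem \cite{P1}: $G$ is solvable iff every pair of elements generates a solvable subgroup. So suppose $G$ is not solvable; then there exist $a, b \in G$ with $\langle a,b\rangle$ nonsolvable. The strategy is to show that every element of $G$ lies in some solvable (indeed nilpotent) subgroup in a compatible way, or to use a theorem of the Thompson–Flavell type that a maximal subgroup being nilpotent forces strong restrictions. Indeed, a classical theorem (Thompson, refined by others) states that a finite group with a nilpotent maximal subgroup of odd order is solvable, and more generally work of Janko/Deskins on groups with a nilpotent maximal subgroup shows such a group is solvable unless the maximal subgroup has even order with Sylow $2$-subgroup of class $\le 2$ and the group is one of a short list — but actually the cleanest route is: a finite group possessing a nilpotent maximal subgroup is solvable (this is a known theorem when combined with CFSG-free arguments for small cases, or one cites it directly). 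So once $M = \langle nil_G(x)\rangle$ is shown nilpotent and maximal, solvability of $G$ follows.

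**The main obstacle** I anticipate is precisely the bridge from the commutator identity on the set $nil_G(x)$ to nilpotency of the group it generates: $nil_G(x)$ is not assumed to be a subgroup, so one cannot naively apply a "$n$-Engel / bounded class on generators" theorem. The resolution must go through the earlier proposition of the paper (every $n$ elements generating a class-$\le (n-1)$ nilpotent subgroup implies $nil_G(x)$ is a class-$\le(n-1)$ nilpotent subgroup): in case (2), given any three elements $y_1,y_2,y_3 \in nil_G(x)$, the hypothesis $[y_1,y_2,y_3]=1$ says $\langle y_1,y_2,y_3\rangle$ is nilpotent of class $\le 2$ — wait, this needs checking, since $[y_1,y_2,y_3]=1$ for all ordered triples from a generating set does give class $\le 2$ for the generated group by a standard commutator-collection argument (the class-$2$ condition $[G,G,G]=1$ is equivalent to $[g_1,g_2,g_3]=1$ for all $g_i$, and this reduces to generators). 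Thus $\langle y_1, y_2, y_3\rangle$ is class $\le 2$, the earlier result applies with $n = 3$, and $nil_G(x)$ is a nilpotent subgroup of class $\le 2$; hence $M = nil_G(x)$. Part (1) is the special case: pairwise commuting gives every pair generating an abelian (class $\le 1$) subgroup, so $n=2$ gives $nil_G(x) = M$ abelian. In both cases we finish by: $M$ is a nilpotent maximal subgroup of $G$, therefore $G$ is solvable.

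**A remaining subtlety** to handle carefully in the write-up: verifying that "$[g_1,g_2,g_3]=1$ on a generating set $\Rightarrow$ on all of the group" — this is the identity $\gamma_3(G) = 1$ is a \emph{verbal} consequence, and one checks it by expanding $[\gamma_2(G), G]$ using that $\gamma_2(G)$ is generated by commutators $[y_i,y_j]$ together with commutator identities (Hall–Witt), so it suffices that all $[[y_i,y_j],y_k] = 1$; granting this, the reduction to the paper's counting lemma is clean and the rest is the maximal-subgroup solvability theorem.
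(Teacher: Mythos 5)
Your reduction is exactly the paper's: both cases feed into the paper's key lemma on simple commutators (if $[l_1,\ldots,l_n]=1$ for all $l_i\in nil_G(x)$ then $nil_G(x)$ is a subgroup, nilpotent of class at most $n-1$), applied with $n=2$ and $n=3$ respectively, so that $M=\langle nil_G(x)\rangle=nil_G(x)$ is a nilpotent maximal subgroup of class at most $1$ (resp.\ at most $2$). Up to that point you and the paper agree, and your worry about passing from a commutator identity on the generating set to a class bound on the generated group is resolved the same way the paper resolves it.

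The final step, however, contains a genuine error. You close by asserting that ``a finite group possessing a nilpotent maximal subgroup is solvable'' and call this the cleanest route. That statement is false: $PSL(2,17)$ has a nilpotent maximal subgroup, namely its dihedral Sylow $2$-subgroup of order $16$, which has nilpotency class $3$ --- the paper records exactly this example in the Remark following the theorem, precisely to show that the analogous statement fails once the commutator weight exceeds $3$. The correct tool is Janko's theorem as quoted in the paper: if a finite group has a nilpotent maximal subgroup whose Sylow $2$-subgroup has class at most $2$, then the group is solvable. (You in fact state Janko's criterion backwards --- ``solvable unless the maximal subgroup has \ldots\ Sylow $2$-subgroup of class $\le 2$'' --- before discarding it in favor of the false general claim.) The class bound $\le 2$ that you correctly extracted from the hypothesis is not a dispensable refinement; it is exactly what makes Janko's theorem applicable and what makes the theorem true at weights $2$ and $3$ but not beyond. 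With that substitution your argument coincides with the paper's proof; without it the last step does not hold.
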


Finally, we provide another solvibility criterion for a finite group by the size of nilpotentizer of an element of the group, as follows:

\begin{thm}\label{n7}
	Let $G$ be a non-solvable group and $x\in G$ in which $nil_G(x)$ generates a maximal subgroup of $G$. Then $|nil_G(x)|\neq p^n$, for any prime $p$ and $n=1,~~ 2$.
	
\end{thm}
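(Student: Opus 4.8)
The plan is to argue by contradiction and to reduce everything to part (1) of Theorem \ref{n}, which is already available. So suppose $G$ is non-solvable, $nil_G(x)$ generates a maximal subgroup of $G$, and $|nil_G(x)|=p^n$ with $n\in\{1,2\}$. The whole point will be that such a small cardinality \emph{forces} $nil_G(x)$ to be an \emph{abelian subgroup} of $G$ — even though $nil_G(x)$ need not be a subgroup in general — and once that is known, its elements commute pairwise, the subgroup it generates is maximal by hypothesis, and Theorem \ref{n}(1) yields that $G$ is solvable, a contradiction.

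To carry this out, first I would dispose of $x=1$: since $\langle 1,y\rangle=\langle y\rangle$ is cyclic, hence nilpotent, for every $y\in G$, we get $nil_G(1)=G$, so $|G|=p^n$ makes $G$ a $p$-group, hence nilpotent and solvable, against hypothesis. Thus $x\neq1$, and by Theorem \ref{n0}(1), $\langle x\rangle\subseteq C_G(x)\subseteq nil_G(x)$, so $|x|$ divides $p^n$ and $|x|\in\{p,p^2\}$. Next I would invoke Theorem \ref{n0}(2): $nil_G(x)=\bigcup_i M_i$, the union of the maximal nilpotent subgroups $M_i$ of $G$ containing $x$. Each $M_i$ is a subgroup contained in the set $nil_G(x)$, so $|M_i|\le p^n\le p^2$, and $p\mid|M_i|$ since $x\in M_i$; hence $|M_i|\in\{p,p^2\}$. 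Now a short counting argument finishes the reduction. If $n=1$, then $|x|=p$ and $nil_G(x)$, a set of $p$ elements containing the order-$p$ subgroup $\langle x\rangle$, equals $\langle x\rangle$. If $n=2$ and $|x|=p^2$, then likewise $nil_G(x)=\langle x\rangle$. If $n=2$ and $|x|=p$, then either every $M_i$ has order $p$, forcing $M_i=\langle x\rangle$ for all $i$ and hence $nil_G(x)=\langle x\rangle$ of size $p\neq p^2$, a contradiction; or some $M_i$ has order $p^2$, and then $M_i\subseteq nil_G(x)$ with $|M_i|=p^2=|nil_G(x)|$ gives $M_i=nil_G(x)$. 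In every surviving case $nil_G(x)$ is a subgroup of order $p$ or $p^2$, hence abelian (a group of order $p$ or $p^2$ is abelian).

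With $nil_G(x)$ abelian, its elements commute pairwise and $\langle nil_G(x)\rangle$ is maximal in $G$ by hypothesis, so Theorem \ref{n}(1) forces $G$ to be solvable, contradicting the standing assumption; hence no such $x$ exists and $|nil_G(x)|\neq p^n$ for $n=1,2$. The only genuinely delicate step is the bookkeeping in the middle paragraph: one must be careful that the bound $|M_i|\le p^n$ is legitimate (it is, because $M_i\subseteq nil_G(x)$ \emph{as sets}) and that in the case $|x|=p$, $n=2$ the union of the $M_i$'s cannot be anything other than a single subgroup — which the cardinality equality $|M_i|=|nil_G(x)|$ settles. Everything else is immediate from Theorems \ref{n0} and \ref{n}, so no appeal to the classification of finite simple groups or to external solvability criteria is needed.
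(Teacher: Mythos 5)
Your overall strategy --- force $nil_G(x)$ to be an abelian subgroup and then invoke Theorem \ref{n}(1) --- is essentially the paper's, but there is a genuine gap in your middle paragraph. From $M_i\subseteq nil_G(x)$ and $x\in M_i$ you correctly get $|M_i|\le p^n\le p^2$ and $p\mid |M_i|$, but from this you conclude $|M_i|\in\{p,p^2\}$. That inference fails for odd $p$: the bound $|M_i|\le p^2$ is only a set-theoretic cardinality bound, not a divisibility statement, so $|M_i|$ could equal $pm$ for any $1<m<p$ (for instance a maximal nilpotent subgroup of order $2p$ when $p\ge 3$). Consequently your dichotomy in the case $n=2$, $o(x)=p$ (``either every $M_i$ has order $p$ \dots or some $M_i$ has order $p^2$'') omits exactly the configurations in which $nil_G(x)$ could a priori fail to be a subgroup, namely a union of several maximal nilpotent subgroups of order $pm$ meeting in $\langle x\rangle$; your counting argument does not yet rule this out.

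The gap is repairable, and the repair is precisely the paper's Lemma \ref{n6}. If $|M_i|=pm$ with $1<m<p$, then $M_i$ is nilpotent with Sylow $p$-subgroup $\langle x\rangle$ of order $p$ and index $m<p$; since a nilpotent group is the direct product of its Sylow subgroups, $M_i=\langle x\rangle\times H_i\le C_G(x)$. The same inclusion holds trivially when $|M_i|=p$, and when $|M_i|=p^2$ the group $M_i$ is abelian and again lies in $C_G(x)$. Hence every $M_i$ is contained in $C_G(x)$, so $nil_G(x)=C_G(x)$ is a subgroup of order $p$ or $p^2$, hence abelian, and your final paragraph then goes through. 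This is in effect how the paper argues: it uses Corollary \ref{n15}(1) to exclude $nil_G(x)=\langle x\rangle$ (covering your $n=1$ and $o(x)=p^2$ cases) and quotes Lemma \ref{n6} to get $nil_G(x)=C_G(x)$ when $o(x)=p$ and $|nil_G(x)|=p^2$, before concluding with Janko's theorem. Until you close the $|M_i|=pm$ case, the claim that $nil_G(x)$ is a subgroup is not established.
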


All groups considered in the present paper are supposed to be finite and we use the usual notation, the cardinality of a set $X$ is denoted by $|X|$. Moreover, for an element $x\in G$, $o(x)$ is denoted 
the order of $x$, and $C_G(x)$ the centralizer of $x$ in $G$. For a subgroup $H$ of $G$, $C_G(H)$ is denoted the centralizer of $H$ in $G$, the Fitting subgroup of $G$ is denoted by $F(G)$.\\

\section{\textbf{Proofs}}

 First, we give two elementry properties of nilpotentizers and then we prove our main results. 
  \begin{lemma}
 	Let $H$ and $K$ be two subgroups of $G$ such that $G=HK$ and $[H,K]=1$. If $x\in H$, then $nil_G(x)=K nil_H(x)$.
 \end{lemma}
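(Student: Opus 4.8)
The plan is to prove the set equality by establishing the two inclusions $K\,nil_H(x)\subseteq nil_G(x)$ and $nil_G(x)\subseteq K\,nil_H(x)$ separately; recall first that $K\,nil_H(x)=nil_H(x)\,K$ since $[H,K]=1$. Two elementary remarks will be used throughout. First, because $[H,K]=1$, any $k\in K$ commutes with $x$ and with every $h\in H$, hence $k$ lies in the centre of every subgroup of the form $\langle x,h,k\rangle$ with $h\in H$; in particular $\langle x,h,k\rangle=\langle x,h\rangle\langle k\rangle$. Second, if $L$ is a nilpotent group and $\langle k\rangle$ is a cyclic subgroup centralizing $L$, then $L\langle k\rangle$ is nilpotent, being a homomorphic image of the nilpotent group $L\times\langle k\rangle$ under the multiplication map $(a,b)\mapsto ab$. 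I shall also use the obvious fact that, for $h\in H$, the subgroup $\langle x,h\rangle$ is the same whether formed inside $H$ or inside $G$, so that ``$h\in nil_H(x)$'' and ``$\langle x,h\rangle$ nilpotent'' are interchangeable.

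For the inclusion $K\,nil_H(x)\subseteq nil_G(x)$: take $k\in K$ and $h\in nil_H(x)$, so that $L:=\langle x,h\rangle$ is nilpotent. By the two remarks, $M:=\langle x,h,k\rangle=L\langle k\rangle$ is nilpotent. Since $kh\in M$, the subgroup $\langle x,kh\rangle\le M$ is nilpotent, hence $kh\in nil_G(x)$, as wanted.

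For the reverse inclusion $nil_G(x)\subseteq K\,nil_H(x)$: take $g\in nil_G(x)$ and, using $G=HK$, write $g=hk$ with $h\in H$ and $k\in K$. Then $h=gk^{-1}$, so $\langle x,g,k\rangle=\langle x,h,k\rangle$; by the first remark $k$ is central in this group, and $\langle x,g\rangle$ is nilpotent by hypothesis, so by the second remark $\langle x,g,k\rangle=\langle x,g\rangle\langle k\rangle$ is nilpotent. Since $\langle x,h\rangle\le\langle x,g,k\rangle$, the subgroup $\langle x,h\rangle$ is nilpotent, i.e. $h\in nil_H(x)$; therefore $g=hk\in nil_H(x)\,K=K\,nil_H(x)$.

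The argument is short and I do not foresee a genuine obstacle. The only point that needs care is the second remark — that the product of a nilpotent subgroup with a centralizing cyclic subgroup is again nilpotent — which is exactly where one invokes the closure of the class of nilpotent groups under finite direct products and quotients; everything else is bookkeeping with the hypothesis $[H,K]=1$ and $G=HK$.
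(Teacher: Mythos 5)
Your proof is correct and takes essentially the same route as the paper's: both directions reduce to observing that $k$ is central in $\langle x,h,k\rangle$, so that adjoining $\langle k\rangle$ to the nilpotent group $\langle x,h\rangle$ (resp.\ $\langle x,g\rangle$) preserves nilpotency. You are in fact more careful than the paper, whose first line states the inclusion as $\langle x,kh\rangle\le\langle x,h\rangle$ rather than $\langle x,kh\rangle\le\langle x,h\rangle\langle k\rangle$; your version supplies the justification that the paper leaves implicit.
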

 
 \begin{proof}
 	Since $<x,kh>\le <x,h>$ for some $h\in H , k\in K$, so $K nil_H(x) \subseteq nil_G(x)$. Now assume that $y\in nil_G(x)$, then $<x,y>=<x, hk>$ is 
 	nilpotent where $y= hk$ for some $h\in H, k\in K$. On the other hand by hypothesis $<x, hk>\le C_G(k)$ and $<x, h>\le<x, hk, k>$ is nilpotent.
 	 It follows that $h\in nil_H(x)$ and $y\in K nil_H(x)$. The result follows.
 \end{proof}
 
 \begin{cor}
        Let $H$ and $K$ be two subgroups of $G$ such that $G=HK$ and $[H,K]=1$. If $x\in H$ and $H$ is a nilpotent, then $nil_G(x)=G$.    
 \end{cor}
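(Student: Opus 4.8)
The plan is to obtain this as an immediate consequence of the preceding Lemma, for which the only missing ingredient is the computation of $nil_H(x)$. First I would verify that the nilpotency of $H$ forces $nil_H(x) = H$. To see this, take any $y \in H$; then $<x,y>$ is a subgroup of $H$, and since every subgroup of a nilpotent group is again nilpotent, $<x,y>$ is nilpotent. By the definition of the nilpotentizer this means $y \in nil_H(x)$, so $H \subseteq nil_H(x)$. The reverse inclusion $nil_H(x) \subseteq H$ holds trivially because $nil_H(x)$ is by construction a subset of $H$. Hence $nil_H(x) = H$.

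Feeding this into the Lemma, I would then write $nil_G(x) = K\, nil_H(x) = KH$. Finally, since $[H,K] = 1$ the subgroups $H$ and $K$ commute elementwise, and therefore $KH = HK$, which equals $G$ by the standing hypothesis $G = HK$. Chaining these equalities yields $nil_G(x) = KH = HK = G$, which is exactly the claim.

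I do not expect any genuine obstacle here: the entire substance of the argument is the elementary hereditary property that nilpotency passes to subgroups, which is what pins down $nil_H(x) = H$. Once that observation is made, the conclusion is a direct substitution into the Lemma combined with the commuting-product identity $KH = HK = G$, so no further case analysis or structural input about $G$ is required.
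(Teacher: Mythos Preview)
Your proof is correct and matches the paper's intent: the paper states this result as an immediate corollary of the preceding Lemma with no separate proof, and your argument supplies exactly the obvious missing step $nil_H(x)=H$ (by hereditariness of nilpotency) followed by substitution into the Lemma. One minor remark: you do not even need to invoke $[H,K]=1$ to get $KH=HK$, since $G=HK$ being a group already forces $HK=KH$; but your reasoning is of course also valid.
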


  For a group $G$, and $a_1,..., a _n\in G$, we define inductively $[a_1,...,a_n]$  as
 follows: $[a_1] = a_1$ and
  $$[a_1,...,a_n]=[a_1,...,a_{n-1}] ^{-1}a_{n}^{-1}[a_1,...,a_{n-1}] a_n$$  for all $n > 1$

The following Lemma is one of the intriguing keys that one can find nilpotetizers that are subgroups. 
\begin{lemma}\label{n5}
Let $G$ be a group, $x\in G$ and $n\ge 2$. If $[ l_1,...,l_n ]=1$ for every $l_1,...,l_n\in nil_G(x)$, then $nil_G(x)$ is a subgroup. Moreover it is a maximal nilpotent of class at most $n-1$.
\end{lemma}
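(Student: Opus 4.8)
The plan is to reduce everything to the single assertion that $H:=\langle nil_G(x)\rangle$ is nilpotent of class at most $n-1$. Granting this, the rest is immediate. Since $x\in nil_G(x)\subseteq H$, the group $H$ is a nilpotent subgroup of $G$ containing $x$, and by Theorem~\ref{n0}(2) every nilpotent subgroup of $G$ containing $x$ is contained in $nil_G(x)$; hence $H\subseteq nil_G(x)$, and as trivially $nil_G(x)\subseteq H$, we obtain $nil_G(x)=H$, which is therefore a subgroup, nilpotent of class at most $n-1$. For the maximality, if $N$ is any nilpotent subgroup of $G$ with $nil_G(x)\subseteq N$, then $x\in N$, so Theorem~\ref{n0}(2) again yields $N\subseteq nil_G(x)$, whence $N=nil_G(x)$.

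To prove that $H$ is nilpotent of class at most $n-1$, I would establish the following self-contained statement by induction on $m\ge 1$: if a group $K$ is generated by a subset $Y$ such that $[y_1,\dots,y_m]=1$ for all $y_1,\dots,y_m\in Y$ (in the notation $[a_1,\dots,a_m]$ introduced above), then $K$ is nilpotent of class at most $m-1$; one then applies it with $K=H$, $Y=nil_G(x)$ and $m=n$. The base case is trivial: for $m=1$ the hypothesis forces $Y\subseteq\{1\}$, so $K=1$ (or one may start the induction at $m=2$, where $Y$ commutes elementwise, so $K$ is abelian). For the inductive step, fix $y_1,\dots,y_{m-1}\in Y$ and set $c=[y_1,\dots,y_{m-1}]$. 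By the recursive definition of the bracket, the hypothesis gives $[c,y]=[y_1,\dots,y_{m-1},y]=1$ for every $y\in Y$; thus $c$ centralizes $Y$, and therefore centralizes $\langle Y\rangle=K$, so $c\in Z(K)$. Consequently, in $\overline{K}:=K/Z(K)$, which is generated by the image of $Y$, every left-normed commutator of weight $m-1$ in that generating set is the image of some such $c$ and hence is trivial; by the induction hypothesis $\overline{K}$ is nilpotent of class at most $m-2$, and therefore $K$ is nilpotent of class at most $m-1$.

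The one step requiring care is the inductive step, and it comes down to a bookkeeping observation: in the identity $[y_1,\dots,y_{m-1},y_m]=1$ the first $m-1$ arguments may be held fixed while $y_m$ runs over all of $Y$, and this is precisely what forces every weight-$(m-1)$ left-normed commutator of generators into $Z(K)$; once that is observed, passing to $K/Z(K)$ lowers the commutator weight by one and the induction closes. I do not anticipate a genuine obstacle beyond this. Note that finiteness of $G$ is never used in this lemma, and that the deduction that $c$ centralizes $\langle Y\rangle$ once it centralizes $Y$ is valid because an element commuting with a generating set commutes with the whole group it generates.
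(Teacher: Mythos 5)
Your proof is correct, but it is organized differently from the paper's. The paper takes $y,z\in nil_G(x)$, forms the finitely generated subgroup $H=\langle x,y,z\rangle$, and invokes the result (\cite{K}, 2.1.5) that $\gamma_n(H)$ is generated by left-normed commutators of weight at least $n$ in the generators and their inverses; the hypothesis kills all of these (the inverses also lie in $nil_G(x)$ since $\langle x,y\rangle=\langle x,y^{-1}\rangle$), so $H$ is nilpotent of class at most $n-1$, whence $\langle x,yz\rangle\le H$ is nilpotent and $yz\in nil_G(x)$, giving closure; maximality then follows from part (2) of Theorem \ref{n0} exactly as in your last step. You instead prove the underlying fact from scratch --- a group generated by a set all of whose left-normed weight-$n$ commutators vanish is nilpotent of class at most $n-1$ --- by induction on $n$, observing that each weight-$(n-1)$ commutator of generators centralizes the generating set and hence lies in the centre, then passing to the central quotient; and you apply this to all of $\langle nil_G(x)\rangle$ at once rather than to two-element products. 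Your route buys self-containedness (no appeal to \cite{K}) and a clean collapse $\langle nil_G(x)\rangle=nil_G(x)$ via the observation that any nilpotent subgroup containing $x$ is contained in $nil_G(x)$; the paper's route buys brevity, since the cited lemma does the inductive work. Both arguments are valid without the standing finiteness assumption, as you note.
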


\begin{proof}
Assume that $x\in G$ and $y,z\in nil_G(x)$, we take $H=<x, y, z>$. So  by  (\cite{K}, 2.1.5), $\gamma _n(H)$ is generated by commutators of weight
 at least $n$ with set $\{x, y,z,x^{-1}, y^{-1}, z^{-1}\}$, hence by hypothesis $\gamma_n(H)=1$ and $H$ is nilpotent of class at most $n-1$. 
 Besides $<x, yz>\le H$, it implies that $yz\in nil_G(x)$ and it is a subgroup. Clearly $nil_G(x)$ is nilpotent of calss at most $n-1$ and  Case (2) of Theorem \ref{n0}  implies that $nil_G(x)$ is  a maximal nilpotent subgroup. 
\end{proof} 

In what follows, a famous theorem is needed which show the existence of a nilpotent maximal subgroup strongly affects the structure of a finite group (for more information see \cite{z9}).

\begin{thm}\cite{J}\label{n4}
	Let $G$ be a finite group having a nilpotent maximal subgroup $M$. If a $2$-Sylow subgroup of $M$ has class at most $2$, then $G$ is solvable.
\end{thm}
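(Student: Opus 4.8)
The plan is to argue by induction on $|G|$, choosing a minimal counterexample: a non-solvable finite group $G$ of least order admitting a nilpotent maximal subgroup $M$ whose Sylow $2$-subgroup $P$ has class at most $2$. Since $M$ is nilpotent it is the direct product $M=P\times Q$ of its Sylow $2$-subgroup $P$ and its (odd order) Hall $2'$-subgroup $Q$, so in particular $[P,Q]=1$. The entire strategy is to show that this configuration forces $G$ to possess a normal $2$-complement: for then $G$ would be an extension of an odd-order group by the $2$-group $P$, hence solvable by the Feit--Thompson odd order theorem, contradicting the choice of $G$.

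First I would run the standard reductions on the minimal counterexample. The subgroup $M$ cannot be normal, since a normal maximal subgroup has prime index, and $G/M$ cyclic together with $M$ nilpotent would make $G$ solvable. Now let $N$ be any minimal normal subgroup; maximality of $M$ gives either $N\le M$ or $MN=G$. If $N\le M$, then $N$ is a subgroup of the nilpotent group $M$, hence elementary abelian, and the quotient $M/N$ is again a nilpotent maximal subgroup of $G/N$ with Sylow $2$-subgroup of class at most $2$; minimality makes $G/N$ solvable, so $G$ is solvable, a contradiction. If instead $MN=G$ and $N$ is abelian, then $M\cap N$ is normalized by both $M$ and $N$, hence normal in $G$, so minimality forces $M\cap N=1$ and $G=M\ltimes N$ with $M$ nilpotent and $N$ abelian, again solvable. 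Thus every minimal normal subgroup is non-abelian and satisfies $MN=G$; in particular $O_2(G)=1$, since any minimal normal subgroup contained in $O_2(G)$ would be an elementary abelian $2$-group.

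Next I would pin down $P$ as a full Sylow $2$-subgroup and identify its normalizer. If $M$ had odd order, then $G$ would already be solvable by Thompson's theorem that a finite group with a nilpotent maximal subgroup of odd order is solvable; so $P\neq1$. Because $P\trianglelefteq M$, we have $M\le N_G(P)$, and maximality gives $N_G(P)=M$ (the alternative $N_G(P)=G$ would force $1\neq P\le O_2(G)$). A ``normalizers grow'' argument inside a Sylow $2$-subgroup $P^{*}\ge P$ of $G$ then yields $P=P^{*}$: otherwise $P<N_{P^{*}}(P)\le N_G(P)=M$ would put $N_{P^{*}}(P)$ inside $P$, which is impossible. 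Hence $P$ is a Sylow $2$-subgroup of $G$ of class at most $2$, and $N_G(P)=P\times Q$ with $Q$ of odd order and $[P,Q]=1$; in particular $N_G(P)/C_G(P)$ is a $2$-group.

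It then remains to produce the normal $2$-complement. When $P$ is abelian, $P\times Q\le C_G(P)$ gives $N_G(P)=C_G(P)$, and Burnside's normal $p$-complement theorem finishes immediately. The hard part will be the case where $P$ has class exactly $2$: here Burnside fails, and to invoke Frobenius's criterion one must control $N_G(H)/C_G(H)$ for every nontrivial $2$-subgroup $H\le P$, not merely for $P$ itself. This is exactly where the hypothesis ``class at most $2$'' is indispensable, as the restricted commutator structure of $P$ is what forces the $2$-fusion in $G$ to be controlled by $N_G(P)=P\times Q$, so that each $2$-local subgroup is itself $2$-nilpotent. I expect establishing this fusion control to be the genuine obstacle: it is not formal, and it is carried out in the source by a detailed analysis of the $2$-local subgroups in the spirit of Thompson's treatment of the odd-order case, supplemented by transfer and exceptional-character arguments. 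Once that control is secured, Frobenius's theorem provides the normal $2$-complement and the Feit--Thompson theorem closes the contradiction.
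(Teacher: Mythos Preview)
The paper does not prove this statement at all: Theorem~\ref{n4} is quoted from Janko \cite{J} and used as a black box in the proofs of Theorem~1.2 and Lemma~\ref{n13}. There is no argument in the paper to compare your proposal against.

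As for your sketch on its own merits: the reductions on a minimal counterexample are standard and correct, and the identification of $P$ as a full Sylow $2$-subgroup with $N_G(P)=M=P\times Q$ is the right starting point. The abelian case via Burnside is fine. But you yourself flag that the class-$2$ case is left open: you assert that fusion of $2$-subgroups is controlled by $N_G(P)$ and that each $2$-local is $2$-nilpotent, yet you do not supply the mechanism, instead deferring to ``a detailed analysis \dots\ in the spirit of Thompson's treatment.'' That is precisely the content of Janko's paper, so what you have written is an outline of the reduction to the hard step rather than a proof. If you intend to reproduce Janko's argument, the substance lies in the $2$-local analysis you have not carried out; if you only need the theorem as a tool (as this paper does), citing \cite{J} is the appropriate move.
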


Theorem \ref{n4} states that, $2$-Sylow subgroups play a fundamental role in characterization of finite groups by their nilpotent maximal subgroups. 
In particular, when the order of the subgroup $M$ is odd, the Theorem \ref{n4} is immediately realized.\\

Now we are ready to prove Theorem 1.2. 

\text{{\bf Proof of Theorem 1.2.}}

\begin{itemize}

            \item [(1)]By appling Lemma \ref{n5}, $nil_G(x)$ is a subgroup, therefore $nil_G(x)=M$  is an abelian maximal subgroup and the result follows by Theorem \ref{n4}.    
           \item [(2)]By taking $k=3$ in Lemma \ref{n5}, $nil_G(x)$ is a subgroup, so $nil_G(x)=M$  is a nilpotent maximal subgroup of class at most $2$. It follows that $G$ is solvable by Theorem \ref{n4}. 
\end{itemize}

\begin{cor}\label{n15}
Let $G$ be a non-solvable group. If there exists an element $x$ of $G$ such that $nil_G(x)$ generates a maximal subgroup of $G$, then
\begin{itemize}
      \item [(1)]$<x>$ is properly contained in $nil_G(x)$.
      \item [(2)]There is a nilpotent subgroup $H$ of $G$  such that contains $<x>$ properly.
\end{itemize}
\end{cor}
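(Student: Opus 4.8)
The plan is to obtain both statements as short consequences of Theorem~\ref{n} and Theorem~\ref{n0}. Write $M=\langle nil_G(x)\rangle$ for the maximal subgroup provided by the hypothesis, and recall that by Theorem~\ref{n0}(1) we always have $\langle x\rangle\subseteq C_G(x)\subseteq nil_G(x)$; thus in (1) only the \emph{properness} of this containment is in question.

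For (1), I would argue by contradiction. Suppose $nil_G(x)=\langle x\rangle$. Then $nil_G(x)$ is cyclic, so in particular its elements commute pairwise, and by hypothesis it generates the maximal subgroup $M$. Hence Theorem~\ref{n}(1) applies verbatim and forces $G$ to be solvable, contradicting the assumption that $G$ is non-solvable. (Equivalently, one may note that $M=\langle x\rangle$ is then an abelian, hence nilpotent, maximal subgroup whose $2$-Sylow subgroup has class $\le 1\le 2$, so Theorem~\ref{n4} gives solvability directly.) Therefore $\langle x\rangle$ is properly contained in $nil_G(x)$.

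For (2), use part (1) to choose $y\in nil_G(x)\setminus\langle x\rangle$ and put $H=\langle x,y\rangle$. By the very definition of $nil_G(x)$, the subgroup $H$ is nilpotent; it contains $x$, so $\langle x\rangle\le H$, and the containment is proper since $y\in H\setminus\langle x\rangle$. (Alternatively, invoke Theorem~\ref{n0}(2): $nil_G(x)$ is the union of the maximal nilpotent subgroups of $G$ containing $x$, so pick for $H$ one such subgroup containing $y$.) This gives (2).

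There is essentially no hard step; the only point requiring a little care is the observation that the degenerate case $nil_G(x)=\langle x\rangle$ renders $nil_G(x)$ cyclic, so that the pairwise-commuting hypothesis of Theorem~\ref{n}(1) (equivalently, the $2$-Sylow class bound in Theorem~\ref{n4}) is automatically satisfied and delivers the contradiction.
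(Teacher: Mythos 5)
Your proof is correct and takes the route the paper intends: the corollary is stated without a separate proof, as an immediate consequence of Theorem~\ref{n}, and your argument is exactly that deduction --- if $nil_G(x)=\langle x\rangle$ then its elements commute pairwise and Theorem~\ref{n}(1) (or Theorem~\ref{n4} directly) forces $G$ solvable, a contradiction, and part (2) follows by choosing $y\in nil_G(x)\setminus\langle x\rangle$ and taking the nilpotent subgroup $H=\langle x,y\rangle$. No gaps.
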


\begin{remark}
According to Theorem \ref{n}, the following question naturally arises.
Let $G$ be a group, $x\in G $ and $[y_1,..., y_n]=1$ for every $y_1,..., y_n\in nil_G(x)$  with $n>3$. If $nil_G(x)$ is a maximal subgroup of $G$ (note that by Lemma \ref{n5}, $nil_G(x)$ is a subgroup), then is $G$ solvable?
Here we show that the answer to this question is negetive.\\
Let $G=PSL(2,17)$ and $S$ be a $2$-Sylow subgroup of $G$ and $x$ be an element of order $8$ in $S$. Then $nil_G(x)=S$ and $nil_G(x)$ is a nilpotent maximal subgroup of nilpotency class $3$.               
            
\end{remark}  
The above Remark, demonstrates  that the nilpotency class of the maximal subgroup in Janko's theorem is more important than it's  structure. 

As Theorem \ref{n} is not necessarily true for $n>3$. Here, we try to find the structure of a minimal counterexample, as Remark \ref{n14}, below.\\ 
We need the following theorems.

\begin{thm}\label{n9}\cite{Ro}
                 Suppose that $G$ is a finite non-solvable group having a nilpotent maximal subgroup $M$. If $Z(G)=1$, then $M$ is a $2$-Sylow subgroup of $G$.
\end{thm}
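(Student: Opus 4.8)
The plan is to establish two easy reductions and then isolate the arithmetic of $M$; since the statement is quoted from \cite{Ro}, I only sketch an argument. First I would record the reductions. If $M \trianglelefteq G$, then, $M$ being maximal, $G/M$ is cyclic of prime order, so $G' \le M$ and $G$ is solvable; hence under our hypotheses $M$ is \textbf{not} normal. Next, it suffices to prove that $M$ is a $2$-group: if $M$ is a $p$-group and $M < P \in \mathrm{Syl}_p(G)$, then the normaliser-grows property in $p$-groups gives $M < N_P(M) \le N_G(M)$, whence $N_G(M) = G$ by maximality, contradicting non-normality; thus $M = P$ is Sylow. Finally, Janko's Theorem~\ref{n4} together with its odd-order corollary forces $2 \mid |M|$ (an odd nilpotent maximal subgroup would render $G$ solvable) and forces the Sylow $2$-subgroup of $M$ to have nilpotency class at least $3$.

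The second step is to locate $M$ inside the Fitting structure. I would show $M_G = F(G)$: since $F(G) \trianglelefteq G$, maximality gives either $F(G) \le M$ or $F(G)M = G$, and the latter makes $G/F(G) \cong M/(M \cap F(G))$ nilpotent, so $G$ would be solvable, a contradiction; hence $F(G) \le M$ and therefore $F(G) \le M_G$, while $M_G \trianglelefteq G$ is nilpotent and so lies in $F(G)$. With $M_G = F(G)$ in hand I would bring in $Z(G)=1$ through the generalised Fitting subgroup: one has $C_G(F^{*}(G)) \le F^{*}(G)$ always, and $Z(G)=1$ prevents $F(G)=M_G$ from being centralised by the rest of $G$, which is exactly what lets one control a normal odd subgroup sitting inside $M_G$.

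The heart of the argument is to eliminate every odd prime from $|M|$. Fix an odd $q \mid |M|$ and let $Q = O_q(M)$ be the (normal, as $M$ is nilpotent) Sylow $q$-subgroup of $M$, so $M = Q \times R$ with $R = O_{q'}(M)$ and $M \le N_G(Q)$. Maximality yields the dichotomy $N_G(Q) = G$ or $N_G(Q) = M$. In the second case one checks that $Q \in \mathrm{Syl}_q(G)$ (any $q$-subgroup of $N_G(Q)=M=Q\times R$ already lies in $Q$, so no $q$-group can properly normalise $Q$) and that $C_G(Q) = Z(Q) \times R$, whence $N_G(Q)/C_G(Q) \cong Q/Z(Q)$ is a $q$-group; a transfer/fusion analysis should then yield a normal $q$-complement, and induction on $|G|$ applied to the resulting proper normal subgroup would contradict non-solvability. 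In the first case $Q \trianglelefteq G$, so $Q \le F(G)=M_G$; factoring out the odd part of $F(G)$ and arguing on the smaller non-solvable group with its inherited nilpotent maximal subgroup pushes the configuration toward the almost-simple situation, in which (as the $G = PSL(2,17)$ example of the Remark illustrates) the only nilpotent maximal subgroups are Sylow $2$-subgroups.

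I expect the \textbf{main obstacle} to be precisely this elimination of odd primes. The Sylow-normaliser dichotomy itself is elementary, but converting the weak fusion control $N_G(Q)/C_G(Q)\cong Q/Z(Q)$ into an honest normal $q$-complement, and handling the normal-core case in the absence of $Z(G)=1$ being inherited by quotients, is where genuine input is required: either Thompson's theorem that a finite group with a nilpotent maximal subgroup of odd order is solvable, or, at the almost-simple stage, the classification of finite simple groups to list those admitting a nilpotent maximal subgroup — in every instance a Sylow $2$-subgroup. Once $M$ is shown to be a $2$-group the reduction of the first paragraph finishes the proof, and Janko's Theorem~\ref{n4} confirms consistency, since the Sylow $2$-subgroup must then have class at least $3$.
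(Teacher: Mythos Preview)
The paper does not prove this theorem at all: it is stated with a citation to Rose~\cite{Ro} and used as a black box (in the proof of Remark~\ref{n14}), so there is no in-paper argument to compare your sketch against. Your proposal is an attempt to reconstruct Rose's result rather than anything the present paper supplies.

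As a side remark on the mathematics of your sketch: the reductions in your first two paragraphs (non-normality of $M$, $M_G=F(G)$, and that a $p$-group maximal subgroup is automatically Sylow) are all correct and standard. You are also right that the entire difficulty is the elimination of odd primes from $|M|$, and honest about where your outline becomes hand-waving. One specific gap to flag: in the case $N_G(Q)=M$, the conclusion $N_G(Q)/C_G(Q)\cong Q/Z(Q)$ being a $q$-group for the single Sylow subgroup $Q$ does \emph{not} by itself yield a normal $q$-complement (Frobenius' criterion requires control at \emph{every} $q$-subgroup, and Burnside's requires $Q$ abelian or at least central in its normaliser); you would need Thompson's normal $p$-complement theorem for odd $p$, or the Thompson--Feit machinery underlying the odd-order theorem, which is essentially the depth of Rose's original argument. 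So your diagnosis of the obstacle is accurate, but the sketch as written does not close it.
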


\begin{thm}\label{n10}\cite{B}
               Let $G$ be a finite non-solvable group having a nilpotent maximal subgroup. Let $L=F(G)$  be the Fitting subgroup of $G$. Then $\frac{G}{L}$ has
                a unique minimal normal subgroup $\frac{K}{L}$, which is a direct product of copies of a simple group with dihedral $2$-Sylow subgroups, and $\frac{G}{K}$ is a $2$-group.
\end{thm}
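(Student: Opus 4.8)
The plan is to descend to the quotient $\overline{G}=G/L$ (with $L=F(G)$), prove that $\overline{G}$ has trivial Fitting subgroup, and then read off the three assertions from the resulting almost-simple-like structure. First I would justify the descent. Since $L$ is a normal nilpotent subgroup and $M$ is maximal, the subgroup $LM$ equals either $M$ or $G$; in the latter case $G=LM$ is a product of the two nilpotent subgroups $L$ and $M$, hence solvable by the Kegel--Wielandt theorem, contrary to hypothesis. Thus $L\le M$, so $\overline{M}=M/L$ is a nilpotent maximal subgroup of the non-solvable group $\overline{G}$, and by the correspondence theorem it suffices to establish the conclusion for $\overline{G}$, where the Fitting subgroup is now $F(\overline{G})$.

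The heart of the matter is to show $F(\overline{G})=1$, i.e. that $\overline{G}$ has no abelian minimal normal subgroup. Suppose $\overline{V}=O_p(\overline{G})\neq 1$ for some prime $p$. The same product argument forces $\overline{V}\le\overline{M}$, since otherwise $\overline{V}\,\overline{M}=\overline{G}$ would again exhibit $\overline{G}$ as a product of two nilpotent subgroups and make it solvable. Then $\overline{V}$ lies in the characteristic Sylow $p$-subgroup $\overline{T}=O_p(\overline{M})\trianglelefteq\overline{M}$, so $\overline{M}\le N_{\overline{G}}(\overline{T})$, and analysing the two possibilities $N_{\overline{G}}(\overline{T})\in\{\overline{M},\overline{G}\}$ leads either to $\overline{T}\trianglelefteq\overline{G}$ (which lets one strip off the $p$-part and iterate) or to $\overline{M}$ being a self-normalizing nilpotent subgroup controlling $p$-fusion, whence a normal $p$-complement by the Frobenius/Thompson criterion and a proper nontrivial normal structure. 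Driving this local analysis to a contradiction with non-solvability — using Janko's Theorem \ref{n4} to exclude a Sylow $2$-subgroup of class $\le 2$ — is the step I expect to be the main obstacle, since it is exactly here that the deep local group theory enters. Granting $F(\overline{G})=1$, we get $Z(\overline{G})\le F(\overline{G})=1$ and the socle $\mathrm{Soc}(\overline{G})$ is a direct product of non-abelian simple groups.

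With $Z(\overline{G})=1$ in hand, Rose's Theorem \ref{n9} applies and shows that the nilpotent maximal subgroup $\overline{M}$ is in fact a Sylow $2$-subgroup of $\overline{G}$; everything else is then formal. For uniqueness of the minimal normal subgroup: two distinct minimal normal subgroups $\overline{K_1},\overline{K_2}$ intersect trivially, and each satisfies $\overline{K_i}\,\overline{M}=\overline{G}$ (being non-solvable they cannot lie in the nilpotent $\overline{M}$), so the odd part of $|\overline{G}|$ equals that of $|\overline{K_i}|$ because $\overline{M}$ is a $2$-group; but $\overline{K_1}\times\overline{K_2}\le\overline{G}$ has strictly larger odd part (each $\overline{K_i}$ is non-solvable, hence not a $2$-group), a contradiction. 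Therefore $K/L=\mathrm{Soc}(\overline{G})$ is the unique minimal normal subgroup, a direct product of copies of a single simple group $S$ whose factors $\overline{G}$ permutes transitively; and since $K/L\not\le\overline{M}$, maximality gives $\overline{M}(K/L)=\overline{G}$, so $\overline{G}/(K/L)$ is a quotient of the $2$-group $\overline{M}$, i.e. $G/K$ is a $2$-group.

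Finally, it remains to identify $S$. Because $\overline{M}$ is simultaneously a Sylow $2$-subgroup and a maximal subgroup of $\overline{G}$, a Sylow $2$-subgroup of each simple factor $S$, together with its normalizer data, is tightly constrained by this maximality. Invoking the Gorenstein--Walter classification of finite simple groups with dihedral Sylow $2$-subgroups — or, equivalently, the $2$-local analysis underlying it — then forces $S$ to have dihedral Sylow $2$-subgroups, completing the proof. The genuine difficulty is concentrated in the vanishing of $F(\overline{G})$ in the second step and in this final dihedral identification; the intervening uniqueness and $2$-group conclusions are bookkeeping with maximality and with the orders of the factors.
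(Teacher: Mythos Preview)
The paper does not prove this theorem at all: it is quoted verbatim from Baumann \cite{B} and used as a black box in the proof of Remark~\ref{n14}. There is therefore no ``paper's own proof'' to compare your proposal against; any argument you supply is necessarily going beyond what the paper does.

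As for the proposal itself, the overall architecture (pass to $G/F(G)$, show the Fitting subgroup there is trivial, apply Rose's Theorem~\ref{n9}, then count odd parts to get uniqueness and the $2$-group quotient) is the right shape, and the uniqueness and $G/K$ arguments are clean. But two steps are not yet proofs. First, the vanishing of $F(\overline{G})$: you correctly flag this as the main obstacle, and your sketch (``analysing the two possibilities \ldots\ leads either to \ldots\ or to \ldots'') does not actually reach a contradiction; the iteration you allude to has to be made precise, since $O_p(\overline{M})\trianglelefteq\overline{G}$ does not by itself contradict $F(\overline{G})=F(G/F(G))$ being nontrivial. Second, the dihedral identification is circular as written: the Gorenstein--Walter theorem \emph{classifies} simple groups that already have dihedral Sylow $2$-subgroups; it does not establish that the Sylow $2$-subgroup of $S$ is dihedral. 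What Baumann actually proves is that maximality of the Sylow $2$-subgroup $\overline{M}$ in $\overline{G}$ forces the Sylow $2$-subgroup of each simple factor to be dihedral, and this requires genuine $2$-local work (in Baumann's paper, an analysis of involutions and their centralizers), not an appeal to Gorenstein--Walter. If you want a self-contained write-up you will need to fill in both of these, or else simply cite \cite{B} as the present paper does.
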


Furthermore, dihedral $2$-Sylow subgroups have been characterized by Gorenstein and Walter in the following theorem in \cite{G2}.

\begin{thm}\label{n11}\cite{G2}
           If $G$ is a simple group with dihedral $2$-Sylow subgroups, then either $G$ is isomorphic to the projective special linear group $PSL(2, q)$, $q$ odd and $q\ge 5$, or $G$  is isomorphic to the alternating group $A_7$.
\end{thm}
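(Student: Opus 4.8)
The plan is to run the classical analysis of the centralizer of an involution, which is the standard route to recognizing the rank-one groups $PSL(2,q)$ together with $A_7$. Write $S$ for a dihedral Sylow $2$-subgroup of $G$, say $|S| = 2^n$ with $n \geq 2$, and let $z$ be the central involution of $S$ (for $n \geq 3$ this is the unique involution of the cyclic maximal subgroup; for $n = 2$ we have $S \cong V_4$). I would first dispose of the Klein four case $n = 2$ separately: here Burnside's fusion theorem shows that $N_G(S)/C_G(S)$ must permute the three involutions of $S$ transitively, since otherwise some involution would be weakly closed in $S$ and $G$ could not be simple, and a parallel centralizer analysis then yields $PSL(2,q)$ with $q$ congruent to $\pm 3$ modulo $8$. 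So assume $n \geq 3$, so that $S$ possesses a single central involution $z$ and two $S$-classes of reflections.

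The heart of the matter is to determine the structure of $H := C_G(z)$. Since $z \in Z(S)$ and $S$ is a Sylow $2$-subgroup of $G$, the subgroup $S$ is a Sylow $2$-subgroup of $H$ as well, so $H$ again has dihedral Sylow $2$-subgroups. Writing $O(H)$ for the largest normal subgroup of odd order, the target structural statement is that $H$ is $2$-nilpotent, i.e. $H = O(H) \rtimes S$ with $H/O(H) \cong S$; this is exactly the shape of an involution centralizer both in $PSL(2,q)$, where it is dihedral of order $q \mp 1$, and in $A_7$, where it is $C_3 \rtimes D_8$ of order $24$. Two ingredients go into this. First, fusion: by Glauberman's $Z^*$-theorem (or, in Gorenstein and Walter's original treatment, by a direct transfer argument) the involution $z$ cannot be isolated in $S$, and since the only involutions of $S$ are $z$ and the reflections, this forces $z$ to be $G$-conjugate to a reflection, so that $G$ has a single class of involutions. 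Second, and this is where I expect the main obstacle, one must control the odd core $O(H)$: one has to show that it is inherited compatibly between the centralizers of the various involutions and does not obstruct the splitting, a balance and signalizer argument that is the genuinely hard and delicate part of the whole proof.

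With $H$ pinned down up to its odd core, I would determine $|G|$ and the relevant character degrees through Brauer's theory of the principal $2$-block: a dihedral defect group forces a known decomposition matrix, and the method of exceptional characters produces a coherent family of irreducibles whose degrees are essentially $\tfrac{1}{2}(q \pm 1)$, pinning down $|G| = \tfrac{1}{2} q(q-1)(q+1)$ for an odd prime power $q$, or the order $2520$ of $A_7$. Finally the recognition step: the Brauer--Suzuki--Wall characterization identifies a simple group of order $\tfrac{1}{2} q(q^2-1)$ whose involution centralizer is dihedral as $PSL(2,q)$, while the residual configuration with $|H| = 24$ and $|G| = 2520$ is matched directly to $A_7$. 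Throughout, the simplicity of $G$ is invoked repeatedly --- to force $O(G) = 1$ and $Z(G) = 1$, to make the involution fusion transitive, and to rule out a normal $2$-complement in $G$.
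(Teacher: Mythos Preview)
The paper does not prove this theorem at all: it is stated with the citation \cite{G2} and used as a black box from the literature (Gorenstein and Walter). There is nothing to compare your argument against in the paper itself.

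Your sketch is a reasonable high-level outline of the actual Gorenstein--Walter strategy --- separate treatment of the Klein four case, analysis of the centralizer $H=C_G(z)$ of a central involution, fusion via transfer or the $Z^*$-theorem, control of $O(H)$ through balance/signalizer-type arguments, Brauer's character theory for the principal $2$-block with dihedral defect, and finally the Brauer--Suzuki--Wall recognition of $PSL(2,q)$ --- but be aware that the genuine proof occupies three long papers, and the step you flag as ``the main obstacle'' (controlling the odd cores of involution centralizers) is indeed the bulk of the work and cannot be filled in by a paragraph. For the purposes of the present paper, however, no proof is expected: the authors simply invoke the classification as a known result.
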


\begin{remark}\label{n14}
             Let $n\ge4$, and $G$ be a minimal (with respect to the order) non-solvable group having an element $x$ such that $nil_G(x)$ generates a maximal subgroup $M$ of $G$, $Z^*(G)\le M$ and $[ l_1,..., l_n]=1$ for every $l_1,..., l_n\in nil_G(x)$. Then $\frac{G}{F(G)}$ has a unique minimal normal subgroup $\frac{K}{F(G)}=S\times...\times S$ where $S$ is isomorphic either to $PSL(2, q)$, $q$ odd and $q\ge5$ or to the alternating group $A_7$ and $\frac{G}{K}$ is a $2$-group. In particular $nil_G(x)$ is a $2$-Sylow subgroup of $G$.
\end{remark}  
 
\begin{proof}
         Put $H=nil_G(x)$. By applying Lemma \ref{n5}, $H$ is a subgroup and it is nilpotent maximal of class at most $n-1$. Assume that $Z^*(G)\neq1$, 
         thus $|\frac{G}{Z^*(G)}|<|G|$ and $\frac{G}{Z^*(G)}$ is a group with $nil_{\frac{G}{Z^*(G)}}(xZ^*(G))$ which holds true under our hypothesis. Therefore
          $\frac{G}{Z^*(G)}$ is solvable, a contradiction, since $G$ is non-solvable. Hence $Z^*(G)=Z(G)=1$ and by Theorem \ref{n9}, $H$ is a $2$-sylow subgroup 
          of $G$ of order at most $2^n$. So from Theorem \ref{n10}, we imply that there exists a unique minimal normal subgroup $\frac{K}{F(G)}$ of $\frac{G}{F(G)}$ 
          such that $\frac{K}{F(G)}=S\times...\times S$, which $S$ is a non-abelian simple group with dihedral $2$-Sylow subgroups and $\frac{G}{K}$ is a $2$-group. 
          Thus Theorem \ref{n11} follows that, either $S$ is isomorphic to the projective special linear group $PSL(2, q)$, $q$ odd and $q\ge 5$, or it is isomorphic to the alternating group $A_7$.
 \end{proof}
      
Note that, when $F(G)=1$, the structure of group that are not solvable for $n\ge4$ is as follows:

\begin{remark}
            Let $n\ge4$, and $G$ be a minimal (with respect to the order) non-solvable group having an element $x$ such that $nil_G(x)$ generates a maximal subgroup of $G$ and $[ l_1,..., l_n]=1$ for every $l_1,..., l_n\in nil_G(x)$. Then $G$ has a unique minimal normal subgroup $K=S\times...\times S$  where $S$ is isomorphic either to $PSL(2, q)$, $q$ odd and $q\ge5$ or to the alternating group $A_7$. Moreover, $G=K<x>$ and $nil_G(x)$ is a $2$-Sylow subgroup of $G$.
\end{remark}

\begin{proof}
       Since $F(G)=Z^*(G)=Z(G)=1$, so by similar to the proof of Remark \ref{n14},  $G$ has a unique minimal normal subgroup $K=S\times...\times S$  where $S$ is isomorphic either to $PSL(2, q)$, $q$ odd and $q\ge5$ or to the alternating group $A_7$.\\
           Finally from Theorem 2.13 of \cite{I}, we imply that $x$ is not an involution element. Now let $H=<x,K>$. If $H$ is a proper subgroup of $G$, then since $nil_H(x)=H\cap nil_G(x)$  satisfies hypotheses, so $H$  is solvable. It follows that $G=<x, K>=K<x>$. Suppose that $P=M\cap K$  is a $2$-Sylow subgroup of $K$. By the Dedekind’s Modular Law we have
$$M=G\cap M=<x>K\cap M=<x>(K\cap M)= <x>P.$$ Therefore by Theorem \ref{n10}, $P$ is direct product of dihedral groups and the proof is complete.
\end{proof}

Now  we study the influence of $|nil_G(x)|$  on the structure of group $G$.\\ 

\begin{lemma}\label{n13}
	Let $G$ be a non-solvable group and $x\in G$ such that $nil_G(x)$ is a maximal subgroup of $G$. Then $nil_G(x)\neq p^n$ for all odd primes $p$ and every positive integer $n$.
\end{lemma}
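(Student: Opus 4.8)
The plan is to deduce the statement directly from Janko's theorem (Theorem \ref{n4}) via a short contradiction argument. Suppose, for contradiction, that $|nil_G(x)| = p^n$ for some odd prime $p$ and some positive integer $n$. By hypothesis $M := nil_G(x)$ is a maximal subgroup of $G$; since $|M| = p^n$ is a prime power, $M$ is a $p$-group, hence nilpotent. Thus $G$ possesses a nilpotent maximal subgroup, namely $M$.

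Next I would verify that the hypothesis of Theorem \ref{n4} on the Sylow $2$-subgroup is satisfied. Because $p$ is odd, a Sylow $2$-subgroup of $M$ is trivial, and in particular has nilpotency class at most $2$ (indeed class $0$); this is exactly the situation noted after Theorem \ref{n4}, where $|M|$ is odd and the condition on the $2$-Sylow subgroup is automatic. Consequently Theorem \ref{n4} applies to the pair $(G, M)$ and yields that $G$ is solvable. This contradicts the assumption that $G$ is non-solvable, and therefore $|nil_G(x)| \neq p^n$ for every odd prime $p$ and every positive integer $n$, as claimed.

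I do not anticipate a genuine obstacle here: essentially all the content is carried by Theorem \ref{n4}, and the only point requiring (minimal) care is to observe that $M$ having prime-power order already forces $M$ to be nilpotent, so that Theorem \ref{n4} is literally applicable, and that its $2$-Sylow hypothesis holds vacuously for odd $p$. If one wished to avoid quoting Janko's theorem in this special case, an alternative would be to invoke the structure results of Theorems \ref{n9}--\ref{n11}: a non-solvable group with a nilpotent maximal subgroup and trivial centre has that maximal subgroup equal to a Sylow $2$-subgroup, forcing even order. However, this is a heavier route and the argument via Theorem \ref{n4} is clearly the intended one.
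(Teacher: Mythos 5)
Your proposal is correct and follows essentially the same route as the paper: assume $|nil_G(x)|=p^n$ for an odd prime $p$, observe that the maximal subgroup $nil_G(x)$ is then a $p$-group and hence nilpotent with trivial (so class at most $2$) Sylow $2$-subgroup, and apply Janko's theorem (Theorem \ref{n4}) to conclude $G$ is solvable, a contradiction. Your explicit remark that the $2$-Sylow hypothesis holds vacuously for odd order is exactly the observation the paper makes in the comment following Theorem \ref{n4}.
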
  

\begin{proof}
	Suppose, on the contrary, that $|nil_G(x)|=p^n$ for some odd prime $p$ and a positive integer $n$. Therefore $nil_G(x)$ is a $p$-Sylow subgroup of $G$ and $nil_G(x)$ is a nilpotent maximal subgroup of $G$ of odd prime power order. From Theorem \ref{n4}, $G$ is solvable, which is a contradiction.
\end{proof}

The following result shows that the nilpotentizer of an element of prime order has to be large enough when it is not equal to the centralizer.

\begin{lemma}\label{n6}
	Let $G$ be a group and $x\in G$ be of prime order. If $|nil_G(x)|\le p^2$, then $nil_G(x)=C_G(x)$. 
\end{lemma}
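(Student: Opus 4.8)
The plan is to establish the nontrivial inclusion $nil_G(x)\subseteq C_G(x)$, since the reverse inclusion is already part $(1)$ of Theorem \ref{n0}. Write $p=o(x)$ for the prime order of $x$. I would fix an arbitrary $y\in nil_G(x)$ and set $H=\langle x,y\rangle$, which is nilpotent by the very definition of $nil_G(x)$; the whole problem then reduces to showing $xy=yx$.

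The key observation is that $H$ is contained in $nil_G(x)$ \emph{as a set}: for any $h\in H$ the subgroup $\langle x,h\rangle$ lies inside the nilpotent group $H$, hence is itself nilpotent, so $h\in nil_G(x)$. Consequently $|H|\le|nil_G(x)|\le p^2$. Since $H$ is nilpotent it is the direct product $H=P\times Q$ of its Sylow $p$-subgroup $P$ and its (normal) Hall $p'$-subgroup $Q$, and because $o(x)=p$ the element $x$ must lie in $P$.

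Now $|P|\le|H|\le p^2$, so $P$ is a group of order $1$, $p$ or $p^2$, and is therefore abelian. Hence $x\in Z(P)$, while $Q$ centralizes $x$ automatically as $[P,Q]=1$; together these give $x\in Z(H)$, and in particular $xy=yx$, i.e. $y\in C_G(x)$. As $y\in nil_G(x)$ was arbitrary, $nil_G(x)\subseteq C_G(x)$, and with Theorem \ref{n0}$(1)$ we conclude $nil_G(x)=C_G(x)$.

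I do not expect a serious obstacle: the only point requiring a moment's thought is the first step of the second paragraph — that the two–generator subgroup $\langle x,y\rangle$ is itself swallowed by $nil_G(x)$ — after which the cardinality bound $p^2$ forces the Sylow $p$-subgroup to be abelian and everything else is immediate. (One may also note that Theorem \ref{n0}$(3)$ forces $|nil_G(x)|\in\{p,p^2\}$, making the case $|nil_G(x)|=p$ trivial since then $nil_G(x)=\langle x\rangle$; but this refinement is not needed for the argument above.)
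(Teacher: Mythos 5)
Your proof is correct and follows essentially the same route as the paper's: both reduce to showing that any nilpotent subgroup of $nil_G(x)$ containing $x$ has order at most $p^2$ and hence (via the Sylow decomposition of a nilpotent group, with the $p$-part abelian) centralizes $x$. The only cosmetic difference is that you apply this to $\langle x,y\rangle$ for an arbitrary $y\in nil_G(x)$, whereas the paper runs the same argument over the maximal nilpotent subgroups containing $x$, invoking Theorem \ref{n0}(2).
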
  

\begin{proof}
	It is enough to prove that $nil_G(x)\subseteq C_G(x)$. Since $nil_G(x)$ is the union of all maximal nilpotent subgroup of $G$ containing $x$, so we show that if $M$ is a maximal nilpotent subgroup of $G$ containing $x$, then $M\subseteq C_G(x)$. Assume that $M$ is a maximal nilpotent subgroup containing $x$, hence $|M|\le p^2$. If $M$ is a $p$-group, then $M$ is abelian and $M\le C_G(x)$. Now suppose that $M$ is not $p$-group, thus $<x>$ is a $p$-Sylow subgroup of $M$ and $|M:<x>|<p$. Therefore $|M:C_M(<x>)|=1$, this implies that $M=C_M(x)\le C_G(x)$ and the result follows.  
\end{proof}

\text{{\bf Proof of Theorem 1.3.}}

	Assume, on the contrary, that $|nil_G(x)|=p$ for some prime $p$, so from Case (3) of Theorem \ref{n0}, we imply that $nil_G(x)=<x>$ and by Case (1) of Corollary \ref{n15}, a contradiction. Again, arguing by contradiction, suppose that, $|nil_G(x)|=p^2$ for some prime $p$. As $o(x)$ divides $p^2$, then Case (1) of Corollary \ref{n15} yields that $o(x)=p$, this follows that, $nil_G(x)=C_G(x)$ by Lemma \ref{n6} which is our final contradiction by Theorem \ref{n4}.

\begin{cor}\label{n12}
	Let $G$ be a non-solvable group and $x\in G$ where $nil_G(x)$ generates a maximal subgroup $M$ of $G$. If $Z^*(G)\neq 1$, $Z^*(G)\le M$, then
	\begin{itemize}
		\item [(1)]$|nil_G(x)|\neq p^3$ for every prime $p$.
		\item [(2)]$|nil_G(x)|\neq pq$ where $p, q$ are distinct primes.
	\end{itemize}
	
\end{cor}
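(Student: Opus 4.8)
The plan is to deduce both assertions from Theorem~\ref{n7} by passing to the quotient $\bar G := G/Z^*(G)$, in which nilpotentizers are transparent thanks to part (4B) of Theorem~\ref{n0}. Write $H := nil_G(x)$, $M := \langle H\rangle$ and $\bar x := xZ^*(G)$. Applying Theorem~\ref{n0}(4B) with $K=Z^*(G)$ gives $nil_{\bar G}(\bar x)=H/Z^*(G)$, and since $Z^*(G)\le M$ by hypothesis, $\langle nil_{\bar G}(\bar x)\rangle=M/Z^*(G)$ is a maximal subgroup of $\bar G$ (as $M$ is maximal in $G$). Moreover $\bar G$ is non-solvable, $Z^*(G)$ being nilpotent while $G$ is not. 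Thus $\bar G$ and $\bar x$ meet the hypotheses of Theorem~\ref{n7}, so $|nil_{\bar G}(\bar x)|$ is neither a prime nor the square of a prime.

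\medskip
The next ingredient I would establish is the cardinality identity $|H|=|Z^*(G)|\cdot|nil_{\bar G}(\bar x)|$. This comes down to checking that $H$ is a union of cosets of $Z^*(G)$: if $y\in H$ and $k\in Z^*(G)$, then $\langle x,yk\rangle\le\langle x,y\rangle Z^*(G)$, and this last group is nilpotent (adjoining the hypercenter to a nilpotent subgroup preserves nilpotence), so $yk\in H$. Consequently $|Z^*(G)|$ divides $|H|$, and as $Z^*(G)\neq 1$ by hypothesis, $|nil_{\bar G}(\bar x)|=|H|/|Z^*(G)|$ is a \emph{proper} divisor of $|H|$. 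I would also rule out the degenerate case $H=Z^*(G)$: then $x\in Z^*(G)$, whence $nil_G(x)=G=M$, contradicting the maximality of $M$; hence $1<|Z^*(G)|<|H|$ strictly.

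\medskip
Granting this, I would finish with two short divisibility checks. For (1), if $|H|=p^3$ then $|Z^*(G)|\in\{p,p^2\}$ (the value $p^3$ being excluded by the degenerate case), so $|nil_{\bar G}(\bar x)|$ equals $p^2$ or $p$ --- impossible by Theorem~\ref{n7}. For (2), if $|H|=pq$ with $p\neq q$ primes then $|Z^*(G)|\in\{p,q\}$ (again $pq$ is excluded), so $|nil_{\bar G}(\bar x)|$ equals $q$ or $p$, a prime --- again impossible by Theorem~\ref{n7}.

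\medskip
I do not expect a genuine obstacle here: once the reduction to $G/Z^*(G)$ is set up, the rest is bookkeeping, and the hypothesis $Z^*(G)\neq 1$ is precisely what turns $|nil_{\bar G}(\bar x)|$ into a \emph{strictly} smaller parameter to which Theorem~\ref{n7} applies. The only points requiring mild care are the cardinality identity --- i.e.\ that $nil_G(x)$ is genuinely a union of $Z^*(G)$-cosets, not merely that it surjects onto $nil_{\bar G}(\bar x)$ --- and the exclusion of the case $nil_G(x)=Z^*(G)$; both are handled above.
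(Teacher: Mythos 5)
Your proof is correct and follows essentially the same route as the paper: reduce modulo $Z^*(G)$ via Theorem~\ref{n0}(4B), observe that $Z^*(G)\neq 1$ forces $|nil_{G/Z^*(G)}(xZ^*(G))|$ down to $p$ or $p^2$ (resp.\ $p$ or $q$), and invoke Theorem~\ref{n7} in the quotient. You are in fact somewhat more careful than the paper, which silently omits both the exclusion of the degenerate case $nil_G(x)=Z^*(G)$ and the verification that $G/Z^*(G)$ is non-solvable with $M/Z^*(G)$ maximal.
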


\begin{proof}
	(1)~~Suppose, on the contrary, that there are $x\in G$ and peime $p$ with $|nil_G(x)|=p^3$, then Case (4, B) of Theorem \ref{n0} implies that $nil_\frac{G}{Z^*(G)}(xZ^*(G))$ is of order $p$ or $p^2$. Hence by Theorem \ref{n7}, $\frac{G}{Z^*(G)}$ is solvable. This gives the contradiction that $G$ is solvable.\\
	
	(2)~~Since $Z^*(G)\neq 1$ and from Case (4, B) of Theorem \ref{n0}, $nil_\frac{G}{Z^*(G)}(xZ^*(G))= \frac{nil_G(x)}{Z^*(G)}$, hence $nil_\frac{G}{Z^*(G)}(xZ^*(G))$ is of size $p$ or $q$, which is contradiction by Theorem \ref{n7}. 
\end{proof}

We guess that the condition $Z^*(G)=1$ is not necessary in Case (1) of Corollary \ref{n12}, when $p=2$.
\begin{con}
	If $|nil_G(x)|=8$ and $nil_G(x)$ generates a maximal subgroup of $G$, then $G$ is a solvable group. 
\end{con}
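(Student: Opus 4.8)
I would argue by contradiction, so suppose $G$ is non-solvable, $x\in G$, $|nil_G(x)|=8$, and $M:=\langle nil_G(x)\rangle$ is a maximal subgroup of $G$. First I would pin down $o(x)$: by Theorem \ref{n0}(3) it divides $8$; the value $1$ is impossible because $nil_G(1)=G$ (every cyclic group is nilpotent), which would force $|G|=8$ and $G$ solvable; and $o(x)=8$ is excluded by Corollary \ref{n15}(1), which gives $\langle x\rangle\subsetneq nil_G(x)$. So $x$ is a nontrivial $2$-element, and the whole argument would rest on one elementary observation: any Sylow $2$-subgroup $P$ of $G$ containing $x$, being nilpotent, lies inside $nil_G(x)$ by Theorem \ref{n0}(2); hence $|P|\le|nil_G(x)|=8$, i.e. $|G|_2\le 8$.

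Then I would split into cases according to $|G|_2$. If $|G|_2=8$, then $P\subseteq nil_G(x)$ and $|P|=8$ force $nil_G(x)=P$, so $M=\langle nil_G(x)\rangle=P$ is a nilpotent maximal subgroup of order $2^3$, whose Sylow $2$-subgroup (namely $P$) has nilpotency class at most $2$; Theorem \ref{n4} then makes $G$ solvable, a contradiction. If $|G|_2\le 2$, or if $|G|_2=4$ with $P$ cyclic, then $G$ has a normal $2$-complement by Burnside's transfer theorem, hence is solvable by Feit--Thompson --- a contradiction again. This leaves only $|G|_2=4$ with $P\cong C_2\times C_2$; in particular $o(x)=2$, since $x$ lies in the exponent-$2$ group $P$.

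To finish the case $|G|_2=4$ I would run a centralizer squeeze. Since $C_G(x)\subseteq nil_G(x)$ (Theorem \ref{n0}(1)) we have $|C_G(x)|\le 8$, while $P\le C_G(x)$ (because $P$ is abelian and contains $x$) forces $4\mid|C_G(x)|$; the bound $|C_G(x)|_2\le|G|_2=4$ then rules out $|C_G(x)|=8$, so $|C_G(x)|=4$ and $C_G(x)=P$. Next I would show that $P$ is in fact the \emph{unique} maximal nilpotent subgroup of $G$ containing $x$: given any such $N$, write $N=N_2\times N_{2'}$ (Sylow $2$-part times Hall $2'$-part); then $x\in N_2$, so $N_{2'}$ centralizes $x$ and hence $N_{2'}\le C_G(x)=P$, which being a $2$-group forces $N_{2'}=1$, so $N=N_2$ is a $2$-group; then $N$ lies in a Sylow $2$-subgroup $Q$ of $G$, and maximality gives $N=Q$, while $Q\cong C_2\times C_2$ abelian with $x\in Q$ gives $Q\le C_G(x)=P$ and thus $Q=P$. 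Now Theorem \ref{n0}(2) yields $nil_G(x)=P$, so $|nil_G(x)|=4$, contradicting $|nil_G(x)|=8$. Hence no such non-solvable $G$ exists, and the conjecture follows.

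The one genuinely delicate point is the last case: when $|G|_2=4$ the set $nil_G(x)$ is typically not a subgroup and could a priori contain elements of odd order, so one cannot simply invoke Theorem \ref{n4}. What saves the day is the numerical fact that $|C_G(x)|_2\le|G|_2=4$ collapses $C_G(x)$ onto the Sylow $2$-subgroup $P$, together with the structural remark that the odd part of any maximal nilpotent subgroup containing the involution $x$ is centralized by $x$; I would want to check carefully that these two facts really leave no room for $nil_G(x)$ to grow beyond $P$. It is also worth noting that, thanks to the centralizer bound, this route needs neither the Gorenstein--Walter classification of groups with (Klein-four) dihedral Sylow $2$-subgroups nor any of the simple-group structure theorems used in the Remarks above --- only Theorem \ref{n0}, Corollary \ref{n15}, Theorem \ref{n4}, Burnside's transfer theorem, and Feit--Thompson.
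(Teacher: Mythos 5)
The paper offers no proof of this statement---it is posed as an open conjecture (motivated by wanting to drop the hypothesis $Z^*(G)\le M$, $Z^*(G)\neq 1$ from Corollary \ref{n12}(1) when $p=2$)---so there is no argument of the authors to compare yours against. Judged on its own, your proof looks complete and correct, and would settle the conjecture. The key observation, that any Sylow $2$-subgroup containing $x$ lies inside $nil_G(x)$ by Theorem \ref{n0}(2) and hence $|G|_2\le 8$, is sound, and each of the resulting cases closes: when $|G|_2=8$ the set $nil_G(x)$ coincides with that Sylow $2$-subgroup, so $M$ is a nilpotent maximal subgroup whose Sylow $2$-subgroup has order $2^3$ and therefore class at most $2$, and Theorem \ref{n4} applies; cyclic Sylow $2$-subgroups give a normal $2$-complement and solvability via Feit--Thompson; and in the Klein-four case the squeeze $4\mid |C_G(x)|\le |nil_G(x)|=8$ together with $8\nmid |G|$ really does force $C_G(x)=P$. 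The point you flagged as delicate is handled correctly: writing a maximal nilpotent subgroup $N\ni x$ as the direct product of its Sylow subgroups puts its odd part inside $C_G(x)=P$, hence makes it trivial, so $N$ is a Sylow $2$-subgroup equal to $P$, and Theorem \ref{n0}(2) yields $|nil_G(x)|=4$, the desired contradiction. Two minor remarks: the exclusion of $o(x)=8$ via Corollary \ref{n15}(1) could equally be absorbed into the cyclic-Sylow branch (a cyclic Sylow $2$-subgroup of order $8$ already gives a normal $2$-complement), and it is worth making explicit in a write-up that the maximality of $M$ is only used in the $|G|_2=8$ branch. You should present this as a proposition with proof rather than leaving it in the conditional mood.
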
 

\begin{prop}\label{n8}
	If $G$ is non-solvable group with self-centralizer elements of order $3$ and $nil_G(x)$ generates a maximal subgroup of $G$, then $|nil_G(x)|\neq6$ for every $x\in G$.
\end{prop}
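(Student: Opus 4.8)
The plan is to argue by contradiction: suppose $G$ is non-solvable, every self-centralizing element of order $3$ is present, $nil_G(x)$ generates a maximal subgroup $M$ of $G$, and $|nil_G(x)| = 6$. First I would pin down $o(x)$. By Corollary \ref{n15}(1), $\langle x\rangle$ is properly contained in $nil_G(x)$, so $o(x)\in\{2,3\}$ since $o(x)$ divides $6$ and $o(x)\neq 6$. I would rule out $o(x)=2$ as follows: if $o(x)=2$, then since $|nil_G(x)|=6\le 2^2$ would fail, but more usefully one shows any maximal nilpotent subgroup $N$ containing $x$ has order dividing $6$; the $2$-part argument is delicate here, so I expect the cleaner route is to invoke the structure of $nil_G(x)$ directly. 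The key observation is that $nil_G(x)$ is the union of maximal nilpotent subgroups containing $x$ (Theorem \ref{n0}(2)), and each such subgroup has order dividing $6$, hence is cyclic of order $1,2,3,6$ or isomorphic to $S_3$ — but $S_3$ is not nilpotent, so each is cyclic. Thus every maximal nilpotent subgroup containing $x$ is cyclic of order dividing $6$.

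Next I would use the hypothesis on self-centralizing elements of order $3$. If $o(x)=3$: take an element $y\in nil_G(x)\setminus\langle x\rangle$; then $\langle x,y\rangle$ is nilpotent, so it lies in some maximal nilpotent (hence cyclic) subgroup $C$ containing $x$, with $|C|\mid 6$ and $|C|>3$, forcing $|C|=6$, so $C\cong \mathbb{Z}_6$ and $nil_G(x)=C$ has exactly this structure. Then $C=C_G(x)$ would follow (since a generator of the unique subgroup of order $2$ in $C$ centralizes $x$, and conversely I need $C_G(x)\subseteq nil_G(x)=C$, which holds by Theorem \ref{n0}(1)). But $|C_G(x)|=6\neq 3$ contradicts the existence, inside $G$, of a self-centralizing element of order $3$ only if that self-centralizing element is forced to be $x$ — so the real point is to show that the hypothesis "self-centralizer elements of order $3$" means some element of order $3$ satisfies $C_G(z)=\langle z\rangle$, and then to locate such a $z$ inside or conjugate into the picture. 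Alternatively, and more likely the intended argument: from $|nil_G(x)|=6$ and Lemma \ref{n6}-type reasoning applied with the order-$3$ self-centralizing hypothesis, deduce $nil_G(x)=C_G(x)$, then $M=C_G(x)$ is a nilpotent (cyclic of order $6$) maximal subgroup whose $2$-Sylow has class $1\le 2$, so Theorem \ref{n4} gives $G$ solvable — the desired contradiction.

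Concretely, the steps in order: (i) reduce $o(x)$ to $\{2,3\}$ via Corollary \ref{n15}(1); (ii) show every maximal nilpotent subgroup containing $x$ is cyclic of order dividing $6$, so $nil_G(x)$ is a union of such cyclics containing $x$; (iii) combine with $|nil_G(x)|=6$ to force $nil_G(x)$ to be a single cyclic group $\mathbb{Z}_6$; (iv) use the self-centralizing order-$3$ hypothesis together with Theorem \ref{n0}(1) to identify $nil_G(x)=C_G(x)$ (handling the case $o(x)=2$ by noting $x$ then lies in the unique order-$2$ subgroup and the order-$3$ part must be self-centralizing, again pinning down the centralizer); (v) apply Theorem \ref{n4} to the nilpotent maximal subgroup $M=nil_G(x)$ of order $6$ — its $2$-Sylow is cyclic of order $2$, class $1$ — to conclude $G$ is solvable, contradicting non-solvability.

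The main obstacle I anticipate is step (iv): precisely connecting the somewhat informally stated hypothesis "self-centralizer elements of order $3$" to the group $nil_G(x)$. If it means "$G$ contains an element $z$ of order $3$ with $C_G(z)=\langle z\rangle$," one must show that such a $z$ can be taken inside $nil_G(x)$ (or that its centralizer structure propagates), which may require a conjugacy or Sylow argument to align $z$ with $x$; if instead it means "every element of order $3$ is self-centralizing," then the $o(x)=3$ case is immediate but one still must handle $o(x)=2$ by exhibiting an order-$3$ element in $nil_G(x)$ whose self-centralizing property collapses $nil_G(x)$ onto a cyclic $2$-group, contradicting $|nil_G(x)|=6$. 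Either way, the delicate point is the interaction between the centralizer hypothesis and the union-of-cyclics description of $nil_G(x)$; the rest is bookkeeping plus a single invocation of Janko's theorem (Theorem \ref{n4}).
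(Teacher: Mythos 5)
Your case $o(x)=3$ is essentially the paper's own argument: Lemma \ref{n6} gives $nil_G(x)=C_G(x)$, the self-centralizing hypothesis collapses this to $\langle x\rangle$, and Corollary \ref{n15}(1) already yields the contradiction (no appeal to Janko's theorem is needed there). The genuine gap is in the case $o(x)=2$, and it originates in your step (ii): a maximal nilpotent subgroup contained in $nil_G(x)$ has order \emph{at most} $6$, not order \emph{dividing} $6$ --- $nil_G(x)$ is only a $6$-element subset, not a subgroup, so Lagrange gives you nothing. In particular a Klein four-group, or a cyclic group of order $4$, containing $x$ can perfectly well sit inside $nil_G(x)$, so your conclusion that $nil_G(x)$ is a union of cyclic groups of order dividing $6$, hence a single copy of $\mathbb{Z}_6$, fails precisely in the case you yourself flag as delicate. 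Your proposed repair for step (iv) when $o(x)=2$ asks for an element of order $3$ inside $nil_G(x)$, but no such element need exist when $x$ is an involution, so the self-centralizing hypothesis gives you no traction there; the case is left genuinely unresolved.

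The paper closes the $o(x)=2$ case by a different, Sylow-theoretic route that you would need to supply (or replace): since $G$ is non-solvable it is not $2$-nilpotent, so by Burnside's normal $p$-complement theorem (\cite{Rob}, 10.1.9) a Sylow $2$-subgroup $Q$ containing $x$ is not cyclic; since $Q$ is a nilpotent subgroup containing $x$ it lies in $nil_G(x)$ by Theorem \ref{n0}(2), forcing $|Q|=4$; the paper then argues that every involution $y$ of $G$ lies in $nil_G(x)$ (via $\langle x,y\rangle$ being dihedral --- a step which itself requires that dihedral group to be a $2$-group), so that all Sylow $2$-subgroups of $G$ are contained in $nil_G(x)$, and $n_2\ge 3$ forces $|nil_G(x)|>6$, a contradiction. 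Without an argument of this kind excluding non-cyclic subgroups of order $4$ inside $nil_G(x)$, your proof does not go through.
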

\begin{proof}
	Assume that $|nil_G(x)|=6$. By Case (1) of Corollary \ref{n15}, $o(x)\neq 6$. Now as $o(x)$ divides $nil_G(x)$, the following two cases can be distinguished.\\

I. $o(x)=3$. Then Lemma \ref{n6} implies that $nil_G(x)=C_G(x)=<x>$, which is impossible by Case (1) of Corollary \ref{n15} .\\

II. $o(x)=2$. Assume that Q is a $2$-Sylow subgroup of $G$ containing $x$. $G$  is not $2$-nilpotent, because $G$ is not solvable. Therefore by (\cite{Rob},10.1.9), $Q$ is not cyclic, so $|Q|=4$ by Case (2) of Theorem \ref{n0}. Now let $y\in G$ be of order $2$, then $<x,y>$ is a dihedral group and $y\in nil_G(x)$. Thus all $2$-Sylow subgroups of $G$ is contained in $nil_G(x)$. If $n_2$ is the number of $2$-Sylow subgroups of $G$, since $G$ is not solvable, so $n_2>1$. It follows that $n_2\ge 3$ and hence $|nil_G(x)|>6$, which is a contradiction. 
	
\end{proof} 

Now we can set a lower bound for a nilpotentizer in a non-solvable group.

\begin{cor}
	Let $G$ be a non-solvable group and $x\in G$ where $nil_G(x)$ generates a maximal subgroup $M$ of $G$. If $Z^*(G)\neq 1$, $Z^*(G)\le M$, then $|nil_G(x)|\ge12$ 
\end{cor}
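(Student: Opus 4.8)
The statement to prove is the final Corollary: if $G$ is non-solvable, $x\in G$, $nil_G(x)$ generates a maximal subgroup $M$ of $G$, and $1\neq Z^*(G)\le M$, then $|nil_G(x)|\ge 12$. My plan is to proceed by eliminating every positive integer less than $12$ as a possible value of $|nil_G(x)|$, invoking the results accumulated earlier in the paper. First observe that $|nil_G(x)|\ge 2$ is immediate: by Case (3) of Theorem \ref{n0}, $o(x)$ divides $|nil_G(x)|$, and by Case (1) of Corollary \ref{n15}, $\langle x\rangle$ is properly contained in $nil_G(x)$, so $|nil_G(x)|\ge 2$; in fact if $|nil_G(x)|$ were $1$ we would already have a contradiction.

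Next I would knock out the prime-power values $2,3,4,5,7,9,11$. The values $p$ and $p^2$ for $p$ a prime (namely $2,3,4,5,7,9,11$) are excluded directly by Theorem \ref{n7}, which says precisely that $|nil_G(x)|\neq p^n$ for $n=1,2$ when $G$ is non-solvable and $nil_G(x)$ generates a maximal subgroup. The remaining values below $12$ are $6,8,10$. For $6=2\cdot 3$ and $10=2\cdot 5$, these are products $pq$ of two distinct primes, so Case (2) of Corollary \ref{n12} applies (its hypotheses $Z^*(G)\neq 1$ and $Z^*(G)\le M$ are exactly those we are assuming) and rules them out. Finally, $8=2^3$ is a prime cube, so Case (1) of Corollary \ref{n12} — again using $Z^*(G)\neq 1$, $Z^*(G)\le M$ — excludes it. Having eliminated $1,2,3,4,5,6,7,8,9,10,11$, we conclude $|nil_G(x)|\ge 12$.

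The main thing to be careful about is simply that every value strictly between $1$ and $12$ genuinely falls into one of the three buckets (prime or prime-square, product of two distinct primes, or prime cube): $2,3,5,7,11$ are primes; $4=2^2$, $9=3^2$ are prime squares; $6,10$ are $pq$; $8=2^3$ is a prime cube. There is no value left over — in particular $12$ itself is the first integer not of one of these forms (it is $2^2\cdot 3$), which is why the bound stops there. So there is no real obstacle here; the corollary is a bookkeeping consequence of Theorem \ref{n7} together with both parts of Corollary \ref{n12}, and the proof is essentially a short case check. One should state explicitly that $o(x)\mid |nil_G(x)|$ is used only to see that the small cases are not vacuous, while the actual exclusions come from the cited results.

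\begin{proof}
Suppose for contradiction that $|nil_G(x)|\le 11$. By Case (1) of Corollary \ref{n15}, $\langle x\rangle$ is properly contained in $nil_G(x)$, so $|nil_G(x)|\ge 2$; thus $|nil_G(x)|\in\{2,3,4,5,6,7,8,9,10,11\}$. If $|nil_G(x)|\in\{2,3,4,5,7,9,11\}$, then $|nil_G(x)|=p$ or $p^2$ for a prime $p$, contradicting Theorem \ref{n7}. If $|nil_G(x)|\in\{6,10\}$, then $|nil_G(x)|=pq$ for distinct primes $p,q$, contradicting Case (2) of Corollary \ref{n12}. If $|nil_G(x)|=8=2^3$, this contradicts Case (1) of Corollary \ref{n12}. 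In every case we reach a contradiction, so $|nil_G(x)|\ge 12$.
\end{proof}
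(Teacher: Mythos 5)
Your proof is correct and follows exactly the route the paper intends: the paper's own proof is a one-line appeal to Theorem \ref{n7} together with Corollary \ref{n12}, and your argument simply writes out the case check (primes and prime squares via Theorem \ref{n7}, the values $6$ and $10$ via Case (2) of Corollary \ref{n12}, and $8$ via Case (1)) that the paper leaves implicit. Nothing is missing and no different idea is used.
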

\begin{proof}
	By appling Theorem \ref{n7} and Corollary \ref{n12}, the proof is complete.
\end{proof}

\end{document}